\documentclass{amsart}
\usepackage{hyperref}
\usepackage{esint}
\usepackage{amsmath,amsfonts,amsthm,amssymb}
\usepackage{mathtools}

\newtheorem{theorem}{Theorem}[]
\newtheorem{proposition}[theorem]{Proposition}
\newtheorem{lemma}[theorem]{Lemma}

\newtheorem{corollary}[theorem]{Corollary}

\theoremstyle{remark}

\begin{document}
\title[Arbitrarily Slow Decay]{Sets with Arbitrarily Slow Favard Length Decay}
\author{Bobby Wilson}
\date{\today}
\address{Department of Mathematics, Massachusetts Institute of Technology}
\email{blwilson@mit.edu}

\thanks{
The author would like to thank Alexander Volberg for introducing him to this problem and discussions thereafter.\\
This material is based upon work supported by the National Science Foundation under Grant No. DMS-1440140 while the author was in residence at the Mathematical Sciences Research Institute in Berkeley, California, during the Spring 2017 semester.}
\subjclass[2010]{28A75, 28A80}

\keywords{Differentiability, Favard Length, Cantor Set}

\begin{abstract}
In this article, we consider the concept of the decay of the Favard length of $\varepsilon$-neighborhoods of purely unrectifiable sets.  We construct non-self-similar Cantor sets for which the Favard length decays arbitrarily with respect to $\varepsilon$.
\end{abstract}
\maketitle

\section{Introduction}

Let $E \subset \mathbb{R}^2$ be a Borel set with $0< \mathcal{H}^1(E) <\infty$.  Let $\theta \in S^1$ represent a direction in $\mathbb{R}^2$ and $\ell_{\theta}$ be the line through the origin in direction $\theta$.  Finally, let $p_{\theta}: \mathbb{R}^2 \rightarrow \ell_{\theta}$ be defined as the orthogonal projection onto $\ell_{\theta}$.  For computational purposes, it may be convenient to define $p_{\theta}: \mathbb{R}^2 \rightarrow \mathbb{R}$ where $p_{\theta}(x):= x \cdot \theta$.  
We say that $E$ is rectifiable (or countably rectifiable) if there exists a countable collection of Lipschitz maps, $f_i: [0,1] \rightarrow \mathbb{R}^2$, such that 
	\begin{align*}
		\mathcal{H}^1\Big( E \setminus \big[\cup_k f_k([0,1])\big] \Big)=0
	\end{align*}
	Furthermore, we say that $E$ is purely unrectifiable if, for every Lipschitz function, $f:[0,1] \rightarrow \mathbb{R}^2$,
	\begin{align*}
		\mathcal{H}^1\Big( E \cap f([0,1]) \Big)=0.
	\end{align*}
We can define rectifiability for higher dimensional sets, in which case the sets defined above are known as 1-recitifiable and 1-purely unrectifiable.  We will concern ourselves with only one-dimensional sets in this article, so we suppress references to dimension.	

Next, we define a notion known as the Favard length of a set $E$:
	\begin{align*}
		\mbox{Fav}(E) := \fint_{S^1} |p_{\theta}(E) | \, d\theta, 
	\end{align*}
	where $\fint$ denotes the average, $\fint_E f \,d\mu=\mu(E)^{-1}\int_E f\,d\mu$. One part of  Besicovitch's projection theorem, \cite{besi3}, asserts that if $E$ is purely unrectifiable, then $\mathcal{H}^1( p_{\theta}(E) )=0$ for almost every $\theta \in S^1$.  Thus, if a set $E$ is purely unrectifiable, then $\mbox{Fav}(E)=0$. 

 For any $\varepsilon>0$, define the $\varepsilon$-neighborhood of a set $E$ by
	\begin{align*}
		\mathcal{N}(E, \varepsilon):= \{x \in \mathbb{R}^2~|~ \mbox{dist}( x, E)\leq\varepsilon  \}
	\end{align*}

The question we would like to consider is: For which function $\phi:(0,1] \rightarrow (0,\infty)$ do we have 
	\begin{align*}
	0< c_E\leq \frac{\mbox{Fav}(\mathcal{N}(E, \varepsilon))}{\phi(\varepsilon)} \leq C_E<\infty?
	\end{align*}

For a compact, purely unrectifiable set $E$, $\lim_{\varepsilon \rightarrow 0}\mbox{Fav}(\mathcal{N}(E, \varepsilon))=\mbox{Fav}(\mathcal{N}(E, 0))$.  This follows from the monotonicity of Favard length.  In particular, monotonicity implies that for a sequence of compact, nested sets $E_1 \supset E_2 \supset \cdots$,
	\begin{align*}
		\lim_{\varepsilon \to 0}\mbox{Fav}(E_n) = \mbox{Fav}\left( \bigcap_{n=1}^{\infty} E_n \right).
	\end{align*} 
	
Most results regarding Favard length consider different types of Cantor sets.  Denote the classical Four-Corner Cantor set by $\mathcal{C}_4 \subset \mathbb{R}^2$.  Generally, we can define a Cantor set via a family, $\{ T_1, ..., T_L\}$, $T_j: \mathbb{C} \rightarrow \mathbb{C}$ of similarity maps of the form $T_j(z) = \frac{1}{L}z + z_j$, where $z_1, ..., z_L$ are distinct and not co-linear.  We then let $S_{\infty}$ be defined as the unique compact set such that $S_{\infty} = \cup_{j=1}^L T_j(S_{\infty})$. 

The first estimate of the decay of Favard length is due to Mattila.  In \cite{mat90}, he provides a logarithmic lower bound for the Cantor set: $\mbox{Fav}(\mathcal{N}(S_{\infty}, \varepsilon)) \geq C  [\log \varepsilon^{-1}]^{-1}$.  The first upper bound for the generalized Cantor set is an iterated logarithm bound
			\begin{align*}
				\mbox{Fav}(\mathcal{N}(S_{\infty}, \varepsilon)) \leq C \exp( C \log^* \log \varepsilon^{-1}) 
			\end{align*}
			(where $\log^* N= \min \{ n\geq 0~|~ \log \log \cdots \log N \leq 1 \}$) provided by Peres and Solomyak \cite{PS02}.  They also provide a beautiful proof showing that Mattila's estimate for a randomly constructed Four-Corner Cantor set is the best possible bound. In other words, they prove the logarithmic upper bound
			\begin{align*}
				\mathbb{E}_{\omega} \left[\mbox{Fav}(\mathcal{N}(\mathcal{C}^{\omega}_4, \varepsilon)) \right]\leq \frac{C}{ \log \varepsilon^{-1}} 
			\end{align*}
A surprising fact is that although Mattila's lower bound is almost surely the rate at which random Four-Corner Cantor sets decay, it is not the best lower bound for $\mathcal{C}_4$. Bateman and Volberg \cite{BaV10} prove a better lower bound for the Four-Corner Cantor set:
			\begin{align*}
				\mbox{Fav}(\mathcal{N}(\mathcal{C}_4, 4^{-n})) \geq C \frac{\log \log 4^n}{\log 4^n}\geq C'\frac{\log n}{n}.
			\end{align*}
Nazarov, Peres, and Volberg \cite{NPV11} provide what appears to be the type of result we should expect to be valid, in some sense, for all self-similar Cantor sets.  In particular,
		\begin{align*}
			\mbox{Fav}(\mathcal{N}(\mathcal{C}_4, 4^{-n})) \leq Cn^{-p+\delta}
		\end{align*}
for $p=\frac{1}{6}$ and any $\delta>0$.

Following these estimates, many results were proven for particular types of Cantor sets.  Bond and Volberg, \cite{BV10}, prove $\mbox{Fav}(\mathcal{N}(S_{\infty}, \varepsilon)) \leq C  [\log \varepsilon^{-1}]^{-p}$ for some $p>0$ when $L=3$. Furthermore, Bond, \L aba, and Volberg, \cite{BLV14}, prove $\mbox{Fav}(\mathcal{N}(S_{\infty}, \varepsilon)) \leq C [\log \varepsilon^{-1}]^{-p}$ for some $p>0$ when $L=4$. Finally, \L aba and Zhai \cite{LZ} prove that 
			\begin{align*}
				\mbox{Fav}(\mathcal{N}(S_{\infty}, \varepsilon)) \leq C  (\log \varepsilon^{-1})^{-p} 
			\end{align*}
for product Cantor sets with a ``tiling condition".  A more thorough review of the work produced about this subject can be found in a survey of \L aba \cite{L15}.  A slightly different type of result was proven by Bond, \L aba and Zahl \cite{BLZ16} relating the Favard length of self-similar Cantor sets to the visibility.

In many of these results, self-similarity plays a crucial role in establishing estimates on the Favard length. This suggests that the logarithmic bounds are linked, in part, to the self-similarity of the constructions. Through what is proven in this article, we hope to suggest that the estimates demonstrated before are tied to the Hausdorff measure density of the Cantor set construction.
We would like to address a question not answered with Cantor sets defined via iterated function systems.  Particularly, constructing purely unrectifiable sets for which the Favard length decays at any rate  chosen.

\begin{theorem}\label{mainthm}
Let $g: \mathbb{N} \rightarrow \mathbb{R}_+$ be a monotonic sequence of positive numbers such that $\lim_{n \rightarrow \infty} g(n)= \infty$.  Then there exists a measurable set $E$ such that $\mathcal{H}^1(E)=1$, $E$ is purely unrectifiable, and
	\begin{align*}
		\mbox{Fav}(\mathcal{N}(E, 4^{-n})) \gtrsim \frac{1}{g(n)}.
	\end{align*}
\end{theorem}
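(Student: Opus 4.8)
The plan is to realize $E$ as a nested intersection $E=\bigcap_{n\ge 0}\bigcup\mathcal Q_n$, where $\mathcal Q_0=\{[0,1]^2\}$ and each $\mathcal Q_n$ is a family of $4^n$ closed squares of side $4^{-n}$ with pairwise disjoint interiors, $\mathcal Q_{n+1}$ being obtained from $\mathcal Q_n$ by replacing every $Q\in\mathcal Q_n$ with four sub-squares of side $4^{-n-1}$. The only freedom is how those four children sit inside $Q$, and at each stage we use one of two patterns: a \emph{passive} step, in which the four children are arranged in a horizontal row along the bottom edge of $Q$, or an \emph{active} step, in which they occupy the four corners of $Q$ (the Four-Corner pattern). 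Declare stage $n$ active exactly when $\lfloor\log_2 g(n-K)\rfloor>\lfloor\log_2 g(n-K-1)\rfloor$ for a fixed large integer $K$ (setting $g(k)=g(1)$ for $k\le 1$). By telescoping and the monotonicity of $g$, the number $j(n)$ of active stages among the first $n$ equals $\lfloor\log_2 g(n-K)\rfloor-\lfloor\log_2 g(1)\rfloor$, so $j(n)\to\infty$, while $2^{-j(n+O(1))}\gtrsim 1/g(n)$; here $K$ is taken large enough to absorb the few bounded index shifts appearing below. Equipping the compact set $E$ with its uniform probability measure $\mu$ (so $\mu(Q)=4^{-n}$ for $Q\in\mathcal Q_n$), a routine bounded-overlap estimate — in either pattern the four children are spread so that a ball meets only boundedly many squares of a given generation — gives $\mu(B(x,r))\lesssim r$, hence $0<\mathcal H^1(E)\le\sqrt2$ (lower bound by Frostman, upper bound from the tautological cover) and $\mathcal H^1|_E\asymp\mu$; a final dilation by a bounded factor (harmless, and absorbed into the choice of $K$) normalizes $\mathcal H^1(E)=1$.

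The lower bound on Favard length rests on a ``bottom strand''. Put $\mathcal B_0=\{[0,1]^2\}$, and form $\mathcal B_{n+1}$ from $\mathcal B_n$ by retaining all four children of each $Q\in\mathcal B_n$ at a passive stage but only the two \emph{bottom} children at an active stage. A passive step tiles the bottom strip of $Q$, and the bottom Four-Corner children share the bottom edge of $Q$, so by induction every square of every $\mathcal B_n$ has its bottom edge on $\{y=0\}$; since these squares have disjoint interiors, lie in $\{0\le y\le 4^{-n}\}$, and have side exactly $4^{-n}$, their $x$-projections have pairwise disjoint interiors. Counting gives $\#\mathcal B_n=4^{\,n-j(n)}2^{\,j(n)}$, so the union $U_{n+1}$ of the bottom edges of the squares of $\mathcal B_{n+1}$ is a union of collinear segments of total length $\#\mathcal B_{n+1}\cdot 4^{-n-1}=2^{-j(n+1)}$. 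Each square of $\mathcal B_{n+1}$ is a stage-$(n{+}1)$ square, hence meets $E$ and has diameter $4^{-n-1}\sqrt2<4^{-n}$; therefore $U_{n+1}\subseteq\bigcup\mathcal B_{n+1}\subseteq\mathcal N(E,4^{-n})$. As $U_{n+1}$ lies on a line, $|p_\theta(U_{n+1})|=|\cos\theta|\cdot 2^{-j(n+1)}$ in suitable coordinates, so monotonicity of Favard length yields
	\begin{align*}
		\mbox{Fav}\big(\mathcal N(E,4^{-n})\big)\;\ge\;\mbox{Fav}(U_{n+1})\;=\;2^{-j(n+1)}\fint_{S^1}|\cos\theta|\,d\theta\;=\;\frac{2}{\pi}\,2^{-j(n+1)}\;\gtrsim\;\frac{1}{g(n)}.
	\end{align*}

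It remains to check that $E$ is purely unrectifiable, and the content of the construction is that although the active stages may be as sparse as we please, there are infinitely many of them, which is exactly enough. I would show that $E$ has no approximate tangent at any point $x$. Fix an active stage $n$ and a stage-$(n{-}1)$ square $Q\ni x$, and set $s=4^{-n+1}$, so $\mathcal H^1(E\cap Q)\asymp s$. For any line $L$ through $x$: if three of the four corner squares of $Q$ were within distance $s/100$ of $L$, a triangle with one vertex in each of the three would have area both $\gtrsim s^2$ and $\lesssim s\cdot(s/100)$, a contradiction; hence at least two of the corner squares lie at distance $\ge s/100$ from $L$. Those two do not contain $x$ (as $L\ni x$), they carry $E$-mass $\asymp s$, they sit inside $B(x,2s)$, and every point $y$ in them satisfies $\angle(y-x,L)\ge\arcsin(1/200)=:\alpha_0>0$, so they avoid the cone $X(x,L,\alpha_0)$. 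Thus $\mathcal H^1\big(E\cap B(x,2s)\setminus X(x,L,\alpha_0)\big)\gtrsim s\asymp\mathcal H^1\big(E\cap B(x,2s)\big)$, uniformly in $L$, for the infinitely many scales $s$ coming from active stages. Were $E$ to contain a rectifiable subset $F$ with $\mathcal H^1(F)>0$, then at $\mathcal H^1$-a.e.\ point of $F$ that is both an approximate tangent point of $F$ and a point of density $1$ for $F$ with respect to $\mathcal H^1|_E$ (the latter by the density theorem for the Radon measure $\mathcal H^1|_E$), the displayed inequality would be contradicted; so no such $F$ exists and $E$ is purely unrectifiable. (Alternatively, one can route through projections: passive steps never enlarge $p_\theta(\bigcup\mathcal Q_n)$, while each active step shrinks it by a fixed factor for a.e.\ $\theta$, exactly as in the classical analysis of the Four-Corner set, so $|p_\theta(E)|=0$ for a.e.\ $\theta$ and Besicovitch's theorem applies.) I expect the real obstacle to be precisely this balancing act — active steps sparse enough to keep the bottom-strand length $2^{-j(n)}$ above $1/g(n)$, yet numerous enough that their accumulation kills rectifiability — with the rigorous proof of pure unrectifiability (the approximate-tangent argument, equivalently the a.e.\ vanishing of projections) forming the technical heart.
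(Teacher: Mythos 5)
Your proof is correct, but it takes a genuinely different route from the paper. The paper keeps the combinatorial pattern fixed (a four-corner-type splitting at every scale $4^{-m_k}$) and modulates the \emph{amplitude} of the vertical separation, choosing $a_k$ with $\sum_{k\le n}a_k\sim g(n)$; the lower bound $\mathrm{Fav}(E_n)\gtrsim(\sum_{k<n}a_k)^{-1}$ is then extracted by point--line duality and a Cauchy--Schwarz pair-counting argument in the style of Bateman--Volberg, and pure unrectifiability comes from Borel--Cantelli together with a Luzin condition and Kirchheim's density lemma. You instead keep the geometry rigid and modulate \emph{which generations} are nontrivial: the ``bottom strand'' of collinear segments of total length $2^{-j(n)}\gtrsim 1/g(n)$ sitting inside $\mathcal{N}(E,4^{-n})$ gives the Favard lower bound by bare monotonicity, with no duality or $L^2$ machinery, and your ``two far corners'' cone estimate at the infinitely many active scales rules out approximate tangents directly (the regularity $\mu(B(x,r))\lesssim r$ that you need is automatic from area-disjointness of the generation-$n$ squares, so that step is even easier than you suggest). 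Your argument is more elementary on both counts; what the paper's method buys is an estimate that sees every scale and every pair of pieces, and hence applies to amplitude-modulated graph constructions in which no exactly collinear substructure of length $\sim 1/g(n)$ is available, whereas your bound lives entirely on the single line $y=0$. Two small points to tidy: the telescoping identity for $j(n)$ should be the inequality $j(n)\le\lfloor\log_2 g(n-K)\rfloor-\lfloor\log_2 g(1)\rfloor$, since a single active stage can absorb a jump of several dyadic levels; together with $j(n)\to\infty$ this is all you actually use, so nothing breaks. Also, the parenthetical alternative via projections (``each active step shrinks $|p_\theta|$ by a fixed factor'') is not justified as stated and should be dropped in favor of the approximate-tangent argument, which is complete as written.
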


The idea of this theorem was inspired by the construction of a counterexample to Besicovitch's projection theorem in infinite-dimensional Banach spaces appearing in the paper by Bate, Cs\"ornyei and Wilson \cite{BCW}.  In \cite{BCW}, for any separable Banach space, $X$, the authors construct a one-dimensional purely unrectifiable set for which each projection has positive measure. In this paper, we have a similar goal in constructing purely unrectifiable sets with with big projections, so the construction presented here follows very closely to the construction in \cite{BCW}.

The following statement is an obvious corollary to Theorem \ref{mainthm} that allows for the replacement of the discrete sequence with a continuous function.  
		\begin{corollary} \label{maincor}
			Let $\phi: (0,1] \rightarrow \mathbb{R}_+$ be a monotonic function  defined so that 
	\begin{align*}
		\lim_{\varepsilon \rightarrow 0} \phi(\varepsilon)= 0.
	\end{align*}
  Then there exists a measurable set $E$ such that $\mathcal{H}^1(E)=1$, $E$ is purely unrectifiable, and
	\begin{align*}
		\mbox{Fav}(\mathcal{N}(E, \varepsilon)) \gtrsim \phi(\varepsilon).
	\end{align*}
		\end{corollary}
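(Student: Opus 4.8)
The plan is to deduce Corollary~\ref{maincor} from Theorem~\ref{mainthm} by sampling $\phi$ along the geometric scales $\varepsilon = 4^{-n}$ and then recovering the intermediate values from the monotonicity of the Favard length. First I would observe that, since $\phi$ is monotonic, takes values in $(0,\infty)$, and satisfies $\phi(\varepsilon)\to 0$ as $\varepsilon\to 0^+$, it must in fact be non-decreasing. I would then define a sequence $g:\mathbb{N}\to\mathbb{R}_+$ by $g(n)=1/\phi(4^{-(n-1)})$ for $n\ge 1$, extending $g$ to the remaining index, if $0\in\mathbb{N}$, by $g(0)=g(1)$, so that $g$ is defined on all of $\mathbb{N}$ and the argument of $\phi$ always stays in $(0,1]$. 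Because $\phi$ is positive and non-decreasing, $g$ is positive and monotonic, and because $4^{-(n-1)}\to 0$ we get $g(n)\to\infty$; thus $g$ meets the hypotheses of Theorem~\ref{mainthm}.

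Applying Theorem~\ref{mainthm} to this $g$ produces a measurable, purely unrectifiable set $E$ with $\mathcal{H}^1(E)=1$ and
\begin{align*}
\mbox{Fav}(\mathcal{N}(E,4^{-n}))\gtrsim\frac{1}{g(n)}=\phi(4^{-(n-1)}),
\end{align*}
with an implied constant independent of $n$. To finish I would fix an arbitrary $\varepsilon\in(0,1]$ and take $n$ to be the unique integer with $4^{-n}<\varepsilon\le 4^{-(n-1)}$, so $n\ge 1$. The inclusion $\mathcal{N}(E,\varepsilon)\supseteq\mathcal{N}(E,4^{-n})$ together with the monotonicity of the Favard length under set inclusion (namely $|p_\theta(A)|\le|p_\theta(B)|$ whenever $A\subseteq B$) then yields
\begin{align*}
\mbox{Fav}(\mathcal{N}(E,\varepsilon))\ge\mbox{Fav}(\mathcal{N}(E,4^{-n}))\gtrsim\phi(4^{-(n-1)})\ge\phi(\varepsilon),
\end{align*}
the last inequality because $\varepsilon\le 4^{-(n-1)}$ and $\phi$ is non-decreasing. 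Since the implied constant does not depend on $\varepsilon$, this is exactly the assertion of Corollary~\ref{maincor}.

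I do not expect any genuine difficulty: the statement is an essentially formal consequence of Theorem~\ref{mainthm}. The only point needing a moment's care is the direction of the two scale-shifts. Obtaining a \emph{lower} bound on $\mbox{Fav}(\mathcal{N}(E,\varepsilon))$ forces one to drop down to the finer dyadic scale $4^{-n}<\varepsilon$, which a priori loses information; this loss is compensated by having built $g(n)$ out of the value of $\phi$ at the next \emph{coarser} scale $4^{-(n-1)}\ge\varepsilon$, so that the monotonicity of $\phi$ closes the estimate. Everything else is routine bookkeeping.
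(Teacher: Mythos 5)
Your argument is correct and is exactly the routine deduction the paper has in mind when it calls the statement an ``obvious corollary'': set $g(n)=1/\phi(4^{-(n-1)})$, invoke Theorem~\ref{mainthm}, and interpolate between the scales $4^{-n}$ using the monotonicity of $\varepsilon\mapsto\mbox{Fav}(\mathcal{N}(E,\varepsilon))$ and of $\phi$. The paper supplies no proof of the corollary, and your write-up (including the observation that the hypotheses force $\phi$ to be non-decreasing, and the careful pairing of the finer Favard scale with the coarser $\phi$ scale) fills the gap correctly.
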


There are results that, one could argue, suggest that sets with arbitrarily slowly decaying Favard length need not exist.  Particularly, Marstrand's density theorem tells us that if a set $E \subset \mathbb{R}^2$, $\mathcal{H}^1(E)<\infty$, is purely unrectifiable, then
	\begin{align*}
		\Theta^1_*(E,x):=\liminf_{r \rightarrow 0} \frac{ \mathcal{H}^1(E \cap B(x, r))}{2r} <1
	\end{align*} 
for $\mathcal{H}^1$ almost every $x \in E$.  Furthermore, if $E$ is rectifiable, then $\Theta_*^1(E,x)=1$ for almost every $x \in E$. Of course, a classical result of Besicovitch \cite{mattila} showed that there are no sets $E \subset \mathbb{R}^2$ such that $\Theta_*^1(E,x)\in (\tfrac{3}{4}, 1)$ for almost every $x \in E$.  Thus, in some sense, the behavior of purely unrectifiable sets can not be arbitrarily similar to that of rectifiable sets.  One could initially think that the difference between the two cases is perhaps be due to the fact that the value of $\Theta^1_*(E,x)$ is more closely tied to the Euclidean structure of $\mathbb{R}^2$ than $\mbox{Fav}(E, \varepsilon)$ .  However, Preiss and Ti\v ser \cite{PT92}, show that this density gap exists, and is larger than $\tfrac{1}{4}$, for all metric spaces.  Therefore, from this point of view it could come as a surprise that in no sense does a gap exist for this characterization of rectifiability.

This paper is organized as follows:  first, we will detail the construction of the set.  In the following section, we will state some preliminary definitions and lemmas.  We will then compute the Favard length, and finish with a section showing that these sets are purely unrectifiable.

\section{Construction}

The construction of this set is similar to the construction of a four-corner Cantor set.  The difference will be that at every step of the construction the squares will be separated at a smaller distance than the previous step.  One should think of this construction as approaching the trivial construction of a straight line segment via an iterated construction method.

 Define
	\begin{align*}
		f(x):= \left\{ \begin{array}{ll}  0 & x\in  [0,\tfrac{1}{2})+\mathbb{Z},  \\
						3/4 & x\in [\tfrac{1}{2}, 1)+\mathbb{Z} \end{array}\right. 
	\end{align*}

Fix a sequence $(g(k))_{k=1}^{\infty}$ such that $g(k) \rightarrow \infty$ and let
	\begin{align*}
		a_1 &:= g(1)\\
		a_k &:=\mbox{min}[1, g(k)-g(k-1)] \hspace{1cm} k\geq 2.
	\end{align*}
For sequences $g$ that grow super-linearly, $\mathcal{C}_4$ is a suitable example that can be used to prove Theorem \ref{mainthm}.  So it suffices to consider $g$ that grow linearly or sublinearly.  These provide the most interesting examples in this paper.  Now we define a sequence of positive integers $(m_k)_{k=1}^{\infty}$ such that $m_k>m_{k-1}$, $1000 \cdot 4^{-m_{k}} \leq a_{k}4^{-m_{k-1}}$, and $1000 \cdot 4^{-m_{k}} \leq a_{k-1}4^{-m_{k-1}}$.

Next, let $s_k$ be the vertical line segment, $s_k:= \{0\}\times [0,4^{-m_k}]$. For each $n \in \{1, 2,...,\infty\}$, let
	\begin{align*}
		&f_n: [0,1] \rightarrow \mathbb{R}\\
		&f_n(x) := f(2x)+\sum_{j=1}^n a_j4^{-m_j} f(2\cdot4^{m_j}x), \hspace{1cm} x \in [0,1]
	\end{align*}
and
	\begin{align*}
		E_n:= \overline{s_n + \mbox{graph } f_n}.
	\end{align*}
Figure 1 demonstrates an example of a construction of $E_n$ for $n=3$.

Furthermore, let $\mathcal{F}_n := \mbox{graph } f_n$.  We note that 
	\begin{align*}
		\mathcal{F}_{\infty} := \lim_{n \rightarrow \infty} \mathcal{F}_n = \lim_{n \rightarrow \infty} E_n =: E_{\infty}
	\end{align*}

The use of powers of four in the constructions are used to conform the structures of $E_{\infty}$ and $\mathcal{F}_{\infty}$ to that of the four corner Cantor set, and are purely for cosmetic purposes. Four can be replaced by any positive integer greater than one.
$$
\begin{array}{c} \mbox{Figure 1}\\
\includegraphics[scale=0.33]{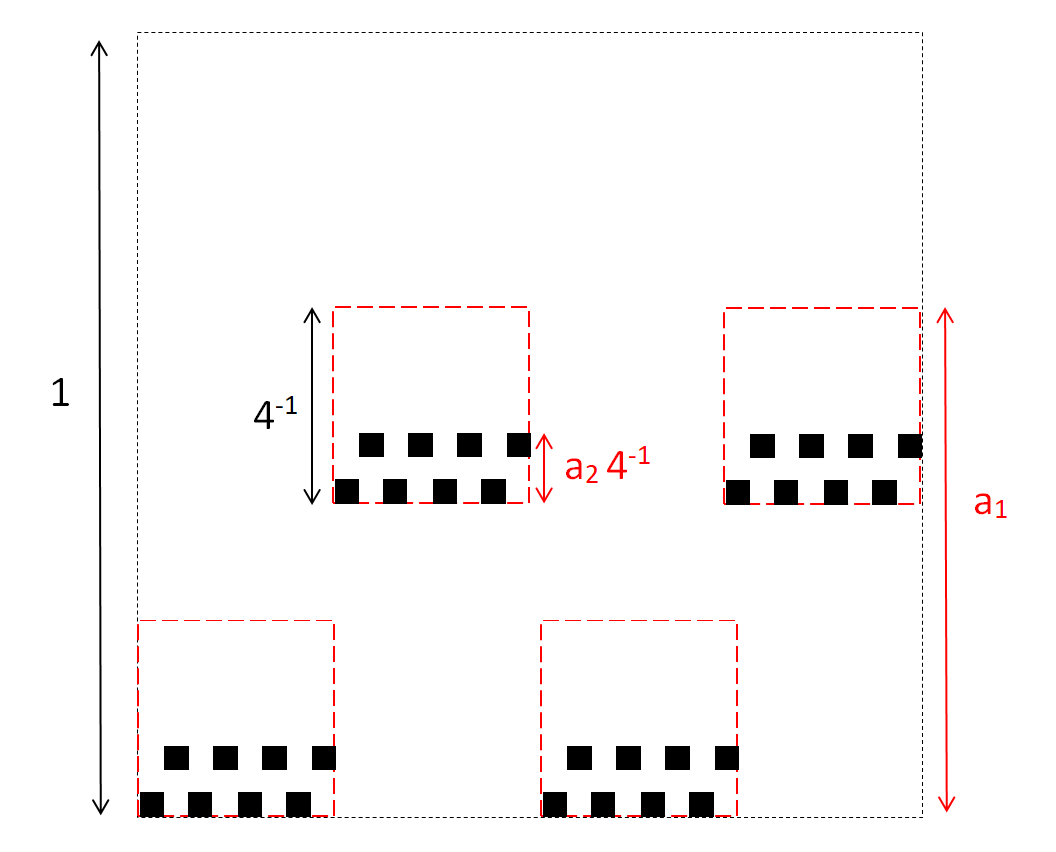}
\end{array}
$$

\section{Preliminaries}

For $A, B \in \mathbb{R}_+$, when we say that $A \lesssim B$ we mean that there exists a $C>0$ such that $A \leq C \cdot B$. Furthermore, when $A \sim B$, there exists $C>0$ such that $C^{-1} \cdot A \leq B \leq C \cdot A$.   $\mathcal{L}^k$ represents the $k$-dimensional Lebesgue measure while $\mathcal{H}^k$ represents the $k$-dimensional Hausdorff measure.  
 The Hausdorff measure $\mathcal{H}^k$ is defined as the limit, $\lim_{\delta\to 0} \mathcal{H}_\delta^k$, where for a given set $E$, 
\begin{align*}
\mathcal{H}_\delta^k(E)=\inf\left\{\sum_i (\mbox{diam}(E_i))^k~\big|~E\subset\bigcup_i E_i,\  \mbox{diam}(E_i)<\delta\right\}.
 \end{align*}  

For each $n \in \mathbb{N}$, let $\mathcal{I}_n$ denote the collection of $4^{-m_n}$-length intervals of the form $[4^{-m_n}k, 4^{-m_n}(k+1))$, for $k \in \mathbb{N}$, in $[0,1]$. The following application of the Borel-Cantelli lemma will be essential,

\begin{lemma}\label{2.3}
Let  $(a_n)\not\in\ell_1$ be a sequence of positive real numbers and let $(m_n)_{n=1}^{\infty}$ be a sequence of positive integers satisfying 
	\begin{itemize}
		\item $m_n>m_{n-1}$ and 
		\item $1000 \cdot 4^{-m_n} \leq a_n 4^{-m_{n-1}}$.
	\end{itemize}
 Then for $\mathcal{L}^1$ almost every $t\in [0,1]$ there are infinitely many $n$ such that $d(t,4^{-m_n}\mathbb N)\leq 4^{-m_n}a_n.$
\end{lemma}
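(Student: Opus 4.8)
The plan is to deduce the statement from a quasi-independence (second moment) form of the Borel--Cantelli lemma applied to the sets
\[
A_n:=\bigl\{t\in[0,1]\ :\ d(t,4^{-m_n}\NN)\le 4^{-m_n}a_n\bigr\},
\]
followed by a zero--one law to promote positive measure of $\limsup_n A_n$ to full measure. A preliminary reduction: if $a_n\ge\tfrac12$ for infinitely many $n$, then since $d(t,4^{-m_n}\NN)\le 4^{-m_n}/2$ for \emph{every} $t\in[0,1]$, every such $t$ already lies in $A_n$ for infinitely many $n$ and we are done; so we may assume $a_n<\tfrac12$ for all $n$ (deleting finitely many terms changes neither $(a_n)\notin\l_1$ nor the conclusion). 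Under this normalization $A_n$ is a disjoint union of $4^{m_n}+1$ intervals centered at the points of $4^{-m_n}\NN\cap[0,1]$ (two of them, at $0$ and $1$, of half length), and an exact count gives $\mathcal L^1(A_n)=2a_n$; in particular $\sum_n\mathcal L^1(A_n)=\infty$.

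The heart of the matter is a pairwise estimate. Fix $j<k$. Since $A_k$ is $4^{-m_k}$-periodic with measure $2\cdot4^{-m_k}a_k$ per period, its intersection with a component interval $I$ of $A_j$ satisfies $\mathcal L^1(A_k\cap I)\le\bigl(|I|\,4^{m_k}+1\bigr)\,2\cdot 4^{-m_k}a_k$; summing over the $4^{m_j}+1$ intervals $I$, whose lengths total $2a_j$, gives
\[
\mathcal L^1(A_j\cap A_k)\ \le\ 4a_ja_k\ +\ 4\cdot 4^{\,m_j-m_k}a_k .
\]
Iterating the hypothesis $1000\cdot 4^{-m_i}\le a_i 4^{-m_{i-1}}$ yields $4^{\,m_j-m_k}\le\prod_{i=j+1}^{k}(a_i/1000)\le a_k\,1000^{-(k-j)}$ (using $a_i<1$), so, with $S_M:=\sum_{n=N}^{M}a_n$,
\[
\sum_{N\le j<k\le M}\mathcal L^1(A_j\cap A_k)\ \le\ 4\Bigl(\sum_{n=N}^{M}a_n\Bigr)^{\!2}\ +\ 4\sum_{n=N}^{M}a_n\sum_{i\ge1}1000^{-i}\ \le\ 5\,S_M^{\,2}
\]
for all $M$ large enough that $S_M\ge1$; the point is that the ``boundary'' sum is only $O(S_M)=O(S_M^{\,2})$, because of the geometric scale separation built into $(m_n)$.

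Since $\sum_{n=N}^{M}\mathcal L^1(A_n)=2S_M\to\infty$, the last display is exactly the hypothesis of the Kochen--Stone lemma; equivalently, applying the Paley--Zygmund inequality to $X_M:=\sum_{n=N}^{M}\mathbf 1_{A_n}$ (so $\mathbb E X_M=2S_M$ and $\mathbb E X_M^{\,2}\le 12\,S_M^{\,2}$) gives $\mathcal L^1(X_M\ge S_M)\ge\tfrac1{12}$, hence $\mathcal L^1\bigl(\bigcup_{n\ge N}A_n\bigr)\ge\tfrac1{12}$ for every $N$, and therefore $\mathcal L^1(\limsup_n A_n)\ge\tfrac1{12}>0$.

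To upgrade this to $\mathcal L^1(\limsup_n A_n)=1$, note that $t\in A_n$ exactly when $\|4^{m_n}t\|\le a_n$ (with $\|x\|$ the distance from $x$ to the integers), a condition depending only on the base-$4$ digits of $t$ beyond place $m_n$; hence $\bigcup_{n\ge N}A_n$ depends only on digits beyond place $m_N$, so $\limsup_n A_n$ belongs to the tail $\sigma$-algebra of the digit sequence, which is i.i.d.\ uniform under $\mathcal L^1$. Kolmogorov's zero--one law then forces $\mathcal L^1(\limsup_n A_n)\in\{0,1\}$, so the value is $1$, as claimed. (Alternatively, periodicity of the $A_n$ makes $\mathcal L^1(\limsup_n A_n\cap J)\ge\tfrac1{12}|J|$ for every quaternary interval $J$, so the zero--one step can be replaced by the Lebesgue density theorem.) I expect the only genuine obstacle to be the pairwise estimate, and in particular verifying that the term $\sum_{j<k}4^{\,m_j-m_k}a_k$ --- which is \emph{not} dominated term-by-term by $a_ja_k$ once some early $a_j$ is much smaller than a later $a_k$ --- nonetheless sums to $O(S_M)$ thanks to $4^{\,m_i-m_{i-1}}\ge 1000/a_i$.
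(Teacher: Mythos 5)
Your proof is correct, but it takes a genuinely different route from the paper's. The paper gets \emph{exact} independence: it rounds $a_n$ down to $b_n\in[\tfrac12 a_n,a_n]$ so that the good set at stage $n$ becomes a union of whole intervals of $\mathcal I_n$ occupying the same relative positions inside every interval of $\mathcal I_{n-1}$; the resulting events then depend on disjoint blocks of base-$4$ digits, are independent with probabilities $b_n$ summing to $\infty$, and the second Borel--Cantelli lemma finishes in one line. You instead keep the exact sets $A_n$, prove the pairwise correlation bound $\mathcal L^1(A_j\cap A_k)\le 4a_ja_k+4\cdot 4^{m_j-m_k}a_k$ from periodicity, use the lacunarity hypothesis $4^{m_{i-1}-m_i}\le a_i/1000$ to make the error term $O(S_M)$, and then run Kochen--Stone/Paley--Zygmund followed by Kolmogorov's zero--one law (or the density-point argument) to upgrade $\mathcal L^1(\limsup A_n)\ge\tfrac1{12}$ to full measure. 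I checked the details: the reduction to $a_n<\tfrac12$ is harmless (deleting terms only weakens the constraint $1000\cdot 4^{-m_n}\le a_n4^{-m_{n-1}}$ since the new predecessor index is smaller), the count $\mathcal L^1(A_n)=2a_n$ is exact, the pairwise bound and the geometric summation of the boundary term are right, and $\limsup_nA_n$ is indeed a tail event of the i.i.d.\ digit sequence because $A_n=\{\|4^{m_n}t\|\le a_n\}$ depends only on digits beyond place $m_n$. The trade-off: the paper's argument is shorter but its independence claim really requires the discretization to be spelled out (the raw events $A_n$ are not literally independent, as their digit blocks overlap at infinity); your version needs no independence at all and is more robust, at the cost of the second-moment computation and the extra zero--one-law step.
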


\begin{proof}
For each $n$, let $A_n$ be the integer part of $a_n4^{m_n-m_{n-1}}$ and let $b_n:= A_n4^{m_{n-1}-m_n}$.  Then it is clear, since $1000 \cdot 4^{-m_{k}} \leq a_{k}4^{-m_{k-1}}$, that $\frac{1}{2}a_n \leq b_n \leq a_n$ for each $n$. 

Each $I\in\mathcal{I}_{n-1}$ has $4^{m_n-m_{n-1}}$ subintervals in $\mathcal{I}_{n}$, and $d(t,4^{-m_n}\mathbb N)\leq 2 \cdot 4^{-m_n}a_n$ holds for at least $b_n 4^{m_n-m_{n-1}}$ many subintervals.
These events are independent and a direct application of the Borel-Cantelli lemma completes the proof.
\end{proof}

Next, we show that $f_{\infty}$ satisfies a Luzin condition that is, in a certain way, a measure theoretic Lipschitz condition for the graph of a function.

\begin{lemma}\label{2.2} For any measurable set $S\subset [0,1]$,
	\begin{align*}
		\mathcal{H}^1(\{(x, f_{\infty}(x)) : x\in S\}) \leq K\, |S|.
	\end{align*}
In particular, $\mathcal{H}^1(E)<2$, and $f$ satisfies Luzin's condition:
	\begin{align*}
		\mathcal{H}^1(\{(x, f_{\infty}(x)) : x\in N\})=0
	\end{align*} 
for any Lebesgue null set $N\subset[0,1]$. 
\end{lemma}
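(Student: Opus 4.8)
The plan is to exploit the fact that, although $f_\infty$ is nowhere Lipschitz, at every scale $4^{-m_n}$ its graph is trapped inside a thin box over each interval of $\mathcal I_n$, and then to run a Hausdorff-measure covering argument with these boxes, first replacing $S$ by an open superset adapted to $S$ so that the total width of the boxes is controlled by $|S|$ and not merely by $|\overline S|$.

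First I would record the scale structure of $f_\infty$. Fix $n$ and $I\in\mathcal I_n$. Since $m_n>m_{n-1}$, for $j\le n-1$ the function $x\mapsto a_j4^{-m_j}f(2\cdot4^{m_j}x)$ is constant on $I$, while $x\mapsto f(2\cdot4^{m_n}x)$ runs through two full periods as $x$ ranges over $I$; hence $f_n$ takes only the two values $c_I$ and $c_I+\tfrac34a_n4^{-m_n}$ on $I$, where $c_I$ is the constant value of $f_{n-1}$ on $I$. Using $|f|\le\tfrac34$, $a_j\le1$, and the hypothesis $1000\cdot4^{-m_{j+1}}\le a_j4^{-m_j}$ (so consecutive scales drop by a factor $\ge1000$), the tail satisfies
\[
  0\;\le\;f_\infty(x)-f_n(x)\;\le\;\tfrac34\sum_{j>n}a_j4^{-m_j}\;\le\;\tfrac32\cdot4^{-m_{n+1}}\;\le\;\tfrac1{100}\,4^{-m_n}\qquad\text{for all }x .
\]
Consequently, for each $I\in\mathcal I_n$ the set $\{(x,f_\infty(x)):x\in I\}$ lies in the closed box $R_I:=I\times[c_I,\,c_I+4^{-m_n}]$, both of whose sidelengths are $\le4^{-m_n}$, so $\diam R_I\le\sqrt2\cdot4^{-m_n}=\sqrt2\,|I|$.

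Now the covering argument, for an arbitrary measurable $S\subset[0,1]$. Fix $\eps>0$ and $\delta>0$, choose an open $U\supseteq S$ with $\mathcal L^1(U)<|S|+\eps$, and write $U=\bigsqcup_k(\al_k,\be_k)$ as the disjoint union of its (bounded) components. For each $k$ pick $n_k$ so large that $\sqrt2\cdot4^{-m_{n_k}}<\delta$ and $4^{-m_{n_k}}<\eps\,2^{-k}$. Every $I\in\mathcal I_{n_k}$ meeting $(\al_k,\be_k)$ lies inside $(\al_k-4^{-m_{n_k}},\be_k+4^{-m_{n_k}})$, so there are at most $(\be_k-\al_k)4^{m_{n_k}}+2$ of them, and the corresponding boxes cover $\{(x,f_\infty(x)):x\in S\cap(\al_k,\be_k)\}$ with
\[
  \sum_{I}\diam R_I\;\le\;\sqrt2\Big[(\be_k-\al_k)+2\cdot4^{-m_{n_k}}\Big]\;\le\;\sqrt2\Big[(\be_k-\al_k)+2\eps\,2^{-k}\Big].
\]
Summing over $k$ yields $\mathcal H^1_\delta\big(\{(x,f_\infty(x)):x\in S\}\big)\le\sqrt2\big(\mathcal L^1(U)+2\eps\big)\le\sqrt2\,(|S|+3\eps)$. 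Letting first $\delta\to0$ and then $\eps\to0$ gives the inequality with $K=\sqrt2$; replacing $\mathcal I_n$ by the finer intervals on which $f_n$ is literally constant lowers $K$ essentially to $1$, but $K=\sqrt2$ already suffices.

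It remains to note the two consequences. Luzin's condition is precisely the case $|N|=0$ of the bound just proved. For $\mathcal H^1(E)<2$, apply the covering with $S=[0,1]$: the boxes $R_I$ are closed, so their finite union is a closed set containing $\{(x,f_\infty(x)):x\in[0,1]\}$, hence containing its closure $E$ as well, and therefore $\mathcal H^1(E)\le K<2$. The step I expect to be the real obstacle is the covering count in the third paragraph: using a single scale $\mathcal I_n$ for all of $S$ would only bound the graph's measure by a constant times $\mathcal L^1(\overline S)$, which is useless when $S$ is, say, dense of small measure; it is essential to pass to the open superset $U$ and let $n_k$ depend on the component $(\al_k,\be_k)$, so that $\sum_I|I|$ is governed by $\mathcal L^1(U)\approx|S|$. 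Everything else is bookkeeping with the geometric decay $4^{-m_{j+1}}\le\tfrac1{1000}4^{-m_j}$ and the periodicity of $f$.
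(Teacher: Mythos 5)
Your proof is correct and follows essentially the same route as the paper: both trap the graph of $f_{\infty}$ over each interval of $\mathcal{I}_n$ inside a thin box using the geometric decay $4^{-m_{n+1}}\leq \tfrac{1}{1000}4^{-m_n}$, and both reduce a general measurable $S$ to a countable union of such intervals whose total length is close to $|S|$. The only (cosmetic) difference is that the paper covers by the Minkowski sums $s_k+\mbox{graph } f_k|_I$, exploiting the finer constancy intervals to push the constant essentially to $K=1$, whereas your boxes give $K=\sqrt{2}$ — which, as you note, still lies below the threshold $K<2$ needed in Section \ref{ret}.
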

It will become apparent in Section \ref{ret} that it is crucial that $K<2$.  In fact, we can take $K = 1$.
\begin{proof}
Let $I=I_{m,n} \in \mathcal{I}_n$ denote the dyadic interval $[m4^{-m_n},(m+1)4^{-m_n})$ for $m,n\in\mathbb{N}$. 
For $k>n$ the function $f_k$ is piece wise constant and thus
	\begin{align*}
		\mathcal{H}^1(\{(x, f_{k}(x)) : x\in I \}) = \sum_{J \subset I \atop J \in \mathcal{I}_k} \mathcal{H}^1(\{(x, f_{k}(x)) : x\in J \})=|I|
	\end{align*}
On $J$, the function $f_{\infty}$ oscillates at most $\frac{1}{2}4^{-m_k}$. Therefore, 
	\begin{align*}
		s_k+\mbox{graph } f_k|_I \supset \{(x, f_{\infty}(x)) : x\in I \}
	\end{align*}
which implies that $\mathcal{H}^1_{4^{-m_k}}(\{(x, f_{\infty}(x)) : x\in I \}) \leq |I|$ for each $k>n$.  Thus, $\mathcal{H}^1(\{(x, f_{\infty}(x)) : x\in I \}) \leq |I|$

The statement for a general measurable set $S$ follows by approximating $S$ by a countable union of dyadic intervals.
\end{proof}

\section{Favard Length}

We will now estimate the Favard length of $E_n$.  We will use computation techniques seen in the paper of Bateman and Volberg \cite{BaV10} as well as a technique for the construction of a Kakeya set which uses point-line duality to provide an easy way to relate the Favard length of simple geometric objects to the Lebesgue area of the corresponding dual set.  It will be much simpler to compute that Favard length of $\mathcal{F}_n$ in place of $E_n$, and since $\mathcal{F}_n \subset E_n$ implies $\mbox{Fav}(\mathcal{F}_n) \leq \mbox{Fav}(E_n)$, it suffices to do so for our purposes.
	\begin{proposition}\label{mainprop}
		\begin{align*}
			\mbox{Fav}(E_n) \gtrsim \left(\sum_{k=1}^{n-1} a_k\right)^{-1}.
		\end{align*}
	\end{proposition}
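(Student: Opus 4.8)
The plan is to use point-line duality to convert the Favard length of the piecewise-affine graph $\mathcal{F}_n$ into an area computation, following the Kakeya-set philosophy mentioned in the introduction. Recall that $\mathcal{F}_n = \operatorname{graph} f_n$, where $f_n$ is a step function whose graph consists of $2 \cdot 4^{m_n}$ horizontal segments, each of length $\tfrac12 4^{-m_n}$ (the ``columns'' of the four-corner-type construction after $n$ steps). For a direction $\theta$ close to vertical, parametrize by slope: a point $(x,y)$ projects to a line $\{t \mapsto y - tx\}$ in the dual $(t,u)$-plane, and $|p_\theta(\mathcal{F}_n)|$ is comparable (up to the $\cos\theta$ Jacobian, bounded since we only need directions near vertical) to the width of the dual region $R_n^t := \{\, y - tx : (x,y) \in \mathcal{F}_n\,\}$ at slope $t$. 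Integrating over $t$ in a fixed interval, $\operatorname{Fav}(\mathcal{F}_n) \gtrsim \int |R_n^t|\,dt = \mathcal{L}^2\big(\bigcup_t \{t\}\times R_n^t\big)$, i.e.\ the area of the union of the dual strips of the segments making up $\mathcal{F}_n$.

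**Next I would** estimate that area from below. Each horizontal segment of $\mathcal{F}_n$ at height $h$ over the interval $[c, c+\tfrac12 4^{-m_n}]$ dualizes to a thin parallelogram $P_h = \{(t, h - tx) : x \in [c,c+\tfrac12 4^{-m_n}],\ t \in [-1,1]\}$, a tube of ``vertical thickness'' $\tfrac12 4^{-m_n}$ in the $u$-variable, slanted with $t$-dependent offset. The total area, counted with multiplicity, of these $\sim 4^{m_n}$ tubes is $\sim 1$; the union has area $\sim 1/(\text{typical overlap multiplicity})$. The heart of the matter is therefore to bound the overlap: how many segments of $\mathcal{F}_n$ have dual tubes passing through a given point $(t_0,u_0)$? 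A point $(t_0,u_0)$ lies in $P_h$ iff the line $y = u_0 + t_0 x$ passes within vertical distance $\tfrac12 4^{-m_n}$ of the segment at height $h$; so the overlap multiplicity at $(t_0,u_0)$ counts the segments of $\mathcal{F}_n$ that the line $\ell: y = u_0 + t_0 x$ nearly meets. Because $f_n$ has total oscillation controlled by $\sum_{j\le n} a_j 4^{-m_j} \cdot(\text{number of oscillations})$ — more precisely the key point is that $f_n - f(2x)$ is a sum of small-amplitude perturbations — the relevant estimate is that a line of slope $|t_0|\le 1$ can be $\tfrac12 4^{-m_n}$-close to at most $\sim \sum_{k=1}^{n-1} a_k$ of the columns, since at scale $k$ the slanted line drifts across roughly $a_k$ of the sub-gaps introduced at step $k$. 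This is exactly the Bateman–Volberg-type counting: at each scale the line ``sees'' a number of intervals proportional to $a_k$, and these contributions add.

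**The main obstacle**, as just indicated, will be making the overlap-counting rigorous: showing that for a.e.\ slope $t$ (or on a fixed-measure set of slopes), a line can be within $4^{-m_n}$ of at most $O(\sum_{k<n} a_k)$ of the $4^{m_n}$ columns. I would prove this by a telescoping/scale-by-scale argument on $f_n$: write $f_n = \sum_{j=0}^n (\text{level-}j\text{ piece})$, and observe that a line of bounded slope, as it traverses a single level-$(k-1)$ interval of length $4^{-m_{k-1}}$, moves vertically by $\le 4^{-m_{k-1}}$, which by the defining condition $1000 \cdot 4^{-m_k} \le a_k 4^{-m_{k-1}}$ spans $\lesssim a_k 4^{m_k - m_{k-1}}$ of the level-$k$ subintervals — but only $a_k$ of these are ``active'' gaps per level-$(k-1)$ cell, so crossing picks up $O(a_k)$ near-hits at scale $k$. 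Summing over $j=1,\dots,n-1$ gives the multiplicity bound $O(\sum a_k)$, hence $\mathcal{L}^2(\bigcup_t \{t\}\times R_n^t) \gtrsim 1 / \sum_{k<n} a_k$, and undoing the duality yields $\operatorname{Fav}(E_n) \ge \operatorname{Fav}(\mathcal{F}_n) \gtrsim (\sum_{k=1}^{n-1} a_k)^{-1}$. A final routine check: restricting to directions within, say, $45^\circ$ of vertical only costs a constant, so no generality is lost.
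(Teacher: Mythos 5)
Your setup (dualizing $\mathcal{F}_n$ to a family of thin strips, restricting to a bounded sector of directions, and noting that the total dual mass $\int_Q \sum_S \chi_{S^*}$ is $\sim 1$) matches the paper. The gap is in how you pass from total mass to the area of the union. You bound the union from below by $\bigl(\int m\bigr)/\sup m$ with $m=\sum_S\chi_{S^*}$, which requires an (essential) \emph{pointwise} multiplicity bound $m\lesssim\sum_{k<n}a_k$, i.e.\ that a line of bounded slope meets at most $O(\sum_{k<n}a_k)$ of the level-$n$ segments. That bound is false: the per-scale counts you describe \emph{multiply} across scales rather than add. If a line meets the bounding strip of a level-$(k-1)$ cell, it can clip the sub-strips of several level-$k$ subcells, and each of those branches again at the next scale; the worst-case multiplicity is of the order of a product $\prod_k(1+c\,\beta_k)$, not a sum. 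Concretely, a nearly horizontal line at one of the $2^{n+1}$ attained heights of $f_n$ meets on the order of $2^{-n}4^{m_n}$ segments simultaneously, and even for slopes comparable to $1$ the line can meet two substrips inside each cell it enters, giving multiplicity as large as $\sim 2^n\gg\sum_k a_k$. These high-multiplicity configurations occur on sets of positive (though small) measure in the dual plane, so no a.e.\ $L^\infty$ bound of size $\sum_k a_k$ is available, and the heuristic ``the contributions add'' is exactly where the argument breaks.

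The repair is the one the paper (following Bateman--Volberg) uses: replace the $L^\infty$ bound by an $L^2$ bound. By Cauchy--Schwarz,
\begin{align*}
1\sim\Bigl(\int_Q\sum_S\chi_{S^*}\Bigr)^2\le \mathcal{L}^2(\mathcal{F}_n^*\cap Q)\cdot\int_Q\Bigl(\sum_S\chi_{S^*}\Bigr)^2
=\mathcal{L}^2(\mathcal{F}_n^*\cap Q)\cdot\sum_{S_1,S_2}\mathcal{L}^2(S_1^*\cap S_2^*\cap Q),
\end{align*}
so it suffices to bound the \emph{second} moment, i.e.\ the number of pairs of segments whose dual strips actually intersect inside $Q$, weighted by the area of each intersection. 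Classifying pairs as $k$-pairs (contained in a common $4^{-m_k}$ square but no smaller one, and necessarily within horizontal distance $\lesssim a_k4^{-m_k}$ for their duals to meet in $Q$), one counts $\sim 4^{m_k}(a_k4^{m_n-m_k})^2$ such pairs, each contributing area $\lesssim 4^{-2m_n}/(a_k4^{-m_k})$, and the sum over $k$ is $\lesssim 1+\sum_{k=1}^{n-1}a_k$. Your scale-by-scale drift computation is essentially the right geometric input for counting which pairs interact and for the single-intersection area estimate, but it must be deployed on pairs (second moment) rather than as a pointwise multiplicity bound.
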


	For a fixed $n$, let $\{S\}$ denote the collection of line segments that compose $\mathcal{F}_n$.  For any angle $\theta$, and segment $S$, let $\chi_{S, \theta}(x)$ be the indicator function for the projection of $S$ onto the line centered at the origin with angle $\theta$. Furthermore, let $\mathcal{F}_{n,\theta}$ be the projection of $\mathcal{F}_n$ to the  the line centered at the origin with angle $\theta$.   The remainder of this section is devoted to the proof of Proposition \ref{mainprop}.
%

	
	\subsection{Point-Line Duality}
		We will compute the Favard length using point-line duality.  Consider the following correspondence between points in $\mathbb{R}^2$ and lines in $G(2,1)$:
			\begin{align*}
				P_{\ell}=(a,b)\in \mathbb{R}^2 \leftrightarrow \ell_P =\{ (x, y) \in \mathbb{R} ~|~ y=ax+b\} \in G(2,1)
			\end{align*}

		For a set $E \in \mathbb{R}^2$, let $E^*$ denote the set defined as
			\begin{align*}
				E^*:=\bigcup_{P \in E} \ell_P
			\end{align*}
		The important geometric property that this correspondence induces is contained in the following simple lemma that gives a similar characterization of Favard length. 
		The lemma requires a follows from the observation that, with the correspondence $\theta=(\theta_1, \theta_2) \leftrightarrow (\tfrac{\theta_1}{\theta_2}, 1)$, we can say
			\begin{align*}
				\theta^{-1}_2 \cdot p_{\theta}(E) &= \left\{(\tfrac{\theta_1}{\theta_2}, 1)\cdot (a,b) ~|~   (a,b) \in E\right\}\\
					&=\bigcup_{(a,b) \in E} \left\{ax+b~|~ x=\tfrac{\theta_1}{\theta_2}\right\}= \left\{x=\tfrac{\theta_1}{\theta_2}\right\} \cap E^*
			\end{align*}
where, for $a\in \mathbb{R}$, and $F \subset \mathbb{R}$, $a \cdot F:= \{a \cdot f ~|~ f \in F\} $. Thus heuristically we can think of $\mbox{Fav}(E)$ as being similar to $\mathcal{L}^2 (E^* )$.  Having to normalize the projections makes this an imprecise statement.  Particularly when $\theta_1/\theta_2\gg1$. We can avoid this technicality because we are trying to establish a lower bound.

		\begin{lemma}
			Let $E \subset \mathbb{R}^2$ be a measurable set satisfying $\mathcal{H}^1(E)<\infty$.  Then 
				\begin{align*}
					\mbox{Fav} (E) \gtrsim \mathcal{L}^2(E^* \cap Q) 
				\end{align*}
			where $Q$ is the vertical strip $Q=[0,1] \times\mathbb{R}$.
		\end{lemma}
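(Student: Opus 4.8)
\emph{Proof proposal.}
The plan is to recognize $\mathcal{L}^2(E^*\cap Q)$, via Fubini in the vertical direction, as an integral of rescaled projection lengths over a sub-arc of $S^1$, and then to absorb all the rescaling factors into the implied constant, since the strip $Q$ keeps us away from the vertical direction where those factors would blow up. It is harmless to assume $E$ is Borel (in the application $E$ is a finite union of segments, hence compact): the description ``$(x,y)\in E^*$ iff $(a,\,y-ax)\in E$ for some $a\in\mathbb{R}$'' exhibits $E^*$ as the image of a Borel set under a coordinate projection, so $E^*$ is analytic and in particular $\mathcal{L}^2$-measurable, and the same description makes its vertical slices measurable.

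With this in hand, Tonelli's theorem gives
\begin{align*}
	\mathcal{L}^2(E^*\cap Q)=\int_0^1 \mathcal{L}^1\big(\{y\in\mathbb{R}:(t,y)\in E^*\}\big)\,dt.
\end{align*}
For fixed $t\in[0,1]$ the computation preceding the statement identifies the slice $\{y:(t,y)\in E^*\}$ with $\{at+b:(a,b)\in E\}=\theta_2^{-1}p_\theta(E)$, where $\theta=\theta(t)$ is a unit vector with $\theta_1/\theta_2=t$, so that $\theta_2^{-1}=\sqrt{1+t^2}$. Hence
\begin{align*}
	\mathcal{L}^2(E^*\cap Q)=\int_0^1 \sqrt{1+t^2}\;|p_{\theta(t)}(E)|\,dt.
\end{align*}

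Finally I would change variables from the slope $t$ to the angle $\alpha$ via $\theta(\alpha)=(\cos\alpha,\sin\alpha)$, $t=\cot\alpha$, for which $|dt|=(1+t^2)\,d\alpha$; as $t$ ranges over $[0,1]$, $\alpha$ ranges over the arc $[\pi/4,\pi/2]\subset S^1$. Therefore
\begin{align*}
	\mathcal{L}^2(E^*\cap Q)=\int_{\pi/4}^{\pi/2}(1+\cot^2\alpha)^{3/2}\,|p_{\theta(\alpha)}(E)|\,d\alpha\leq 2^{3/2}\int_{S^1}|p_\theta(E)|\,d\theta\;\lesssim\;\mbox{Fav}(E),
\end{align*}
using $1+\cot^2\alpha\in[1,2]$ on $[\pi/4,\pi/2]$ and $|p_\theta(E)|\geq 0$.

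The only genuine subtlety is the one already flagged in the text: the normalization factor $\theta_2^{-1}=\sqrt{1+t^2}$ and the Jacobian $(1+t^2)$ both degenerate as $\theta_2\to0$, i.e. as the projection direction approaches the vertical; restricting to $Q=[0,1]\times\mathbb{R}$ is precisely what confines us to slopes $t\in[0,1]$, where both factors are comparable to $1$. Everything else is routine bookkeeping once $E^*$ and its slices are known to be measurable, and the passage to a general measurable $E$ (if desired) can be handled by inner regularity of $\mathcal{H}^1\!\restriction\! E$ together with Besicovitch's theorem applied to the null remainder, but this is not needed for the application.
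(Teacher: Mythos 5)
Your argument is correct and is essentially the paper's own proof run in the reverse direction: both identify the vertical slice of $E^*$ at abscissa $t$ with the rescaled projection $\theta_2^{-1}p_{\theta}(E)$ for the direction of slope $t$, apply Fubini/Tonelli over $t\in[0,1]$, and absorb the normalization and Jacobian factors into the constant using that $\theta_2$ is bounded below on this arc. Your explicit computation of the Jacobian $(1+\cot^2\alpha)^{3/2}\le 2^{3/2}$ and the remark on measurability of $E^*$ are slightly more careful than the paper's sketch, but the approach is the same.
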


		\begin{proof}
			This proof relies on the fact that if $\tfrac{\theta_1}{\theta_2} \in [0,1]$ then $\theta_2 \in [\sqrt{2}^{-1}, 1]$.  Acknowledging this, let $G := \{\theta \in S^1~|~ \tfrac{\theta_1}{\theta_2} \in [0,1]$ we have
				\begin{align*}
					\mbox{Fav}(E) &= \int_{S^1} |p_{\theta}(E)| \,d\theta \geq \int_{G} |p_{\theta}(E)|\,d\theta\\
								&\gtrsim \int_{G} \theta_2 |p_{\theta}(E)|\,d\theta =  \int_{G}  |\theta_2\cdot p_{\theta}(E)|\,d\theta\\
								&\gtrsim \int_0^1 |\{x=\xi\} \cap E^*\}| \,d\xi\\
								&= \mathcal{L}^2(E^* \cap Q).
				\end{align*}
		\end{proof}

		The crucial benefits of this lemma are two-fold.  First, computing the area of $E^*$ is much simpler than computing $\mbox{Fav}(E)$.  Second, the closer $S_1$ and $S_2$ are to each other on the $y$-axis the more likely that $(S^*_1 \cap S^*_2) \bigcap Q = \emptyset$.  In fact, if $(S^*_1 \cap S^*_2) \bigcap Q \neq \emptyset$ and $|p_{(0,1)}S_1-p_{(0,1)}S_2| \leq \delta$, then for some $C>0$,
		\begin{align*}
			\mbox{dist}(p_{(1,0)}S_1, p_{(1,0)}S_2) \leq C\delta.
		\end{align*}
		This will be of importance when demonstrating the final inequality of the following estimate.  Using Cauchy-Schwartz, as in \cite{BaV10}, we have

\begin{align*}
			1 =4^{m_n} \cdot 4^{-m_n} &\sim  \int_Q \sum_{S}\chi_{S^*}(x) \,dx\\
				& \leq \left( \mathcal{L}^2\big((\bigcup_{S \subset \mathcal{F}_n} S^*) \cap Q\big) \right)^{1/2} \left(  \int_Q \left[\sum_{S}\chi_{S^*}(x)\right]^2 \,dx\right)^{1/2}.
		\end{align*} 
Of course, $\bigcup_{S \subset \mathcal{F}_n} S^*= \mathcal{F}_n^*$ and thus
		\begin{align*}
			\left(\int_{Q} \left( \sum_{ S \subset \mathcal{F}_n}  \chi_{S^*} \right)^2 \right)^{-1} \lesssim \mathcal{L}^2(\mathcal{F}^*_n \cap Q).
		\end{align*}
We have now reduced our estimate on Favard length to an estimate on pairwise intersection of strips.
		\begin{align}
			\left(\sum_{S_1, S_2} \mathcal{L}^2(S^*_1 \cap S^*_2 \cap Q) \right)^{-1}&=\left(\int_{Q}  \sum_{ S_1, S_2 \subset \mathcal{F}_n}  \chi_{S_1^*}\chi_{S_2^*}  \right)^{-1} \nonumber\\
&= \left(\int_{Q} \left( \sum_{ S \subset \mathcal{F}_n}  \chi_{S^*} \right)^2 \right)^{-1} \lesssim \mbox{Fav}(\mathcal{F}_n)\nonumber \\
& \lesssim \mbox{Fav}(E_n)
		\end{align}
Figure 2 illustrates the intersection of a pair of strips that are dual to segments in $\mathcal{F}_n$.  In the next subsection, we detail the geometric and combinatorial computations that will conclude the proof of Proposition \ref{mainprop}.
$$
\begin{array}{c} \mbox{Figure 2}\\
\includegraphics[scale=0.3]{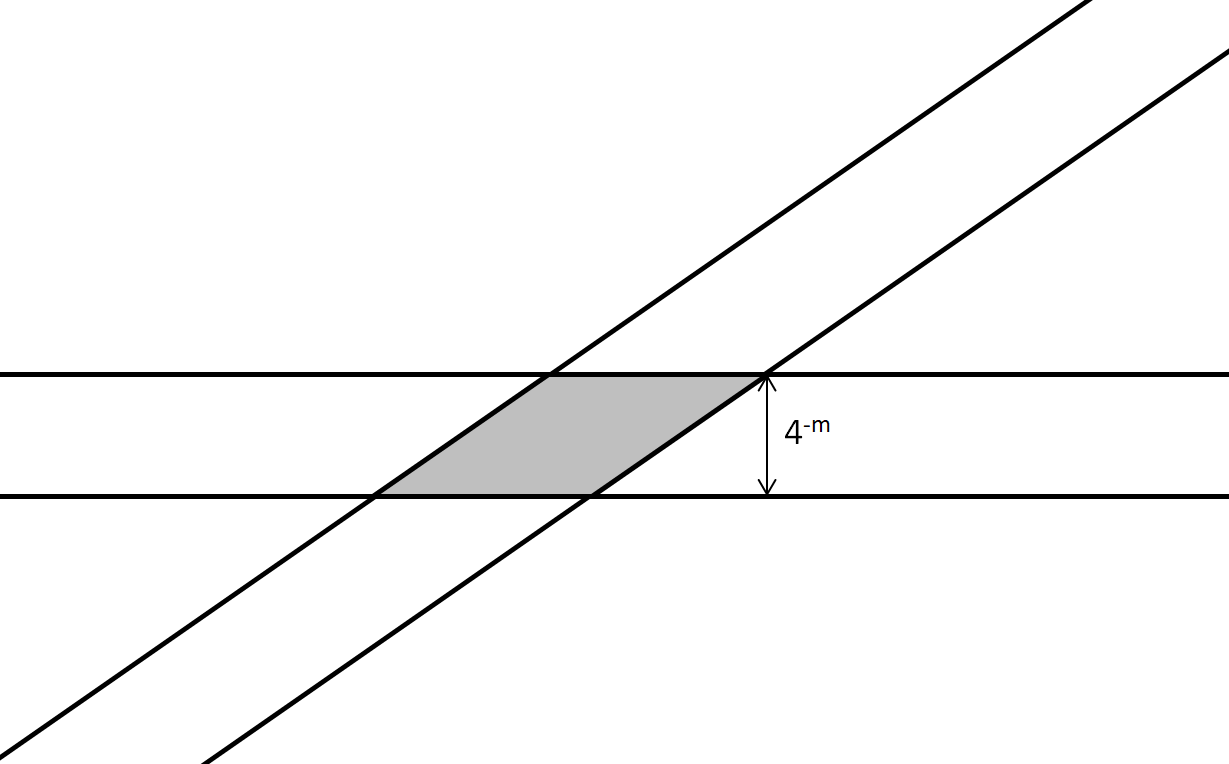}\\
S^*_1 \cap S^*_2
\end{array}
$$

	\subsection{Proof of Proposition \ref{mainprop}}
		Again, borrowing from Bateman and Volberg, we call a pair of $4^{-m_n}$ length segments a $k$-pair if $S_1, S_2$ are in one $ 4^{-m_k}$
 square, but not in any $4^{-m_{k+1}}$ square, for all $k \in \{ 1, ..., n\}$. We further require that $S_1$ and $S_2$ are no further than $Ca_k4^{-m_k}$ away from each other horizontally for some fixed constant $C>0$. We can enforce this requirement by the argument from above. Particularly, if $S_1$ and $S_2$ are in one $4^{-m_k}$ square then $|p_{(0,1)}S_1 - p_{(0,1)}S_2| \leq 10 \cdot a_k4^{-m_k}$ and so $S_1^* \cap S^*_2 \cap Q \neq \emptyset $ if and only if  $S_1$ and $S_2$ are no further than $Ca_k4^{-m_k}$ away from each other horizontally.  By a simple calculation (that can be found in \cite{BaV10}) we have  $\sim 4^{m_k} \cdot ( a_k4^{m_n-m_k})^2$,  $k$-pairs for each $k \in \{1,...,n-1\}$.  For $k=n$, $S_1=S_2$.

		By a simple geometric computation,

			\begin{align*}
				\mathcal{L}^2(S^*_1 \cap S^*_2 \cap Q) \lesssim \frac{4^{-2m_n}}{4^{-m_k}a_k}.
			\end{align*}

		Thus,
			\begin{align*}
				 \sum_{S_1, S_2} \mathcal{L}^2(S^*_1 \cap S^*_2 \cap Q)
				&\lesssim \sum_{k=1}^n \sum_{S_1, S_2 \, k-\mbox{pair}} \mathcal{L}^2(S^*_1 \cap S^*_2 \cap Q)\\
				&\lesssim \sum_{k=1}^n \sum_{S_1, S_2 \, k-\mbox{pair}} \frac{4^{-2m_n}}{4^{-m_k}a_k}\\
				&\lesssim 1+\sum_{k=1}^{n-1} a_k
			\end{align*}
		This is exactly what we want to show.

\section{Rectifiability} \label{ret}
	\begin{proposition}\label{rect}
		$E_{\infty}$ is purely unrectifiable.
	\end{proposition}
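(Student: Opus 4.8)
The plan is to show that $E_\infty$ meets every Lipschitz curve in a set of $\mathcal{H}^1$-measure zero by reducing the problem to the graph $\mathcal{F}_\infty = \mathrm{graph}\, f_\infty$ together with the vertical fiber behavior, and then exploiting the fact that the ``width'' of the squares at stage $k$ (governed by $a_k 4^{-m_k}$) is negligible compared to their height ($4^{-m_k}$), so that $E_\infty$ is essentially the graph of a function satisfying Luzin's condition (Lemma \ref{2.2}) but whose vertical oscillation is too wild to be captured by a countable family of Lipschitz images. First I would record that $E_\infty = \mathcal{F}_\infty$ (as noted in the Construction section, since $s_n \to \{0\}$ in the Hausdorff sense), so it suffices to prove $\mathcal{F}_\infty$ is purely unrectifiable.

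The heart of the argument is a \emph{projection/density} dichotomy. Suppose for contradiction that $\mathcal{H}^1(\mathcal{F}_\infty \cap \Gamma) > 0$ for some Lipschitz curve $\Gamma$. By standard rectifiability theory, a positive-measure piece of a Lipschitz image has an approximate tangent line at $\mathcal{H}^1$-a.e. of its points, and at such a point $x$ the lower density $\Theta^1_*(\mathcal{F}_\infty \cap \Gamma, x)$ equals $1$; moreover this forces $\Theta^1_*(\mathcal{F}_\infty, x) = 1$ as well (the rectifiable part sits inside the whole set). The key step is then to show this is impossible: at \emph{every} point $x = (t, f_\infty(t))$ with $t$ in the full-measure set supplied by Lemma \ref{2.3}, there are infinitely many scales $n$ at which $t$ lies within $4^{-m_n} a_n$ of a point of $4^{-m_n}\mathbb{N}$, i.e.\ $x$ sits near the "edge" of the stage-$n$ interval, and on that scale the graph $\mathcal{F}_\infty$ lives inside a square of side $\sim 4^{-m_n}$ split into four widely-separated sub-squares of total width $\sim a_{n+1} 4^{-m_{n+1}} \ll 4^{-m_n}$. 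I would pair this with Lemma \ref{2.2}: $\mathcal{H}^1(\mathcal{F}_\infty \cap B(x, 4^{-m_n})) \le K \cdot \mathcal{L}^1(\{t' : (t', f_\infty(t')) \in B(x, 4^{-m_n})\})$, and because $x$ is near an edge, the relevant $t'$-set is contained in a union of intervals whose total length is strictly less than $\tfrac{3}{4}\cdot 2\cdot 4^{-m_n}$ — here is exactly where $K < 2$ (indeed $K=1$) is used, as flagged in the remark after Lemma \ref{2.2} — giving $\Theta^1_*(\mathcal{F}_\infty, x) \le c < 1$ along this subsequence of scales, contradicting $\Theta^1_* = 1$. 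An alternative, cleaner route I would also consider is to bypass densities and argue directly that any Lipschitz curve has, at a.e.\ of its points, a direction $\theta$ in which it projects bi-Lipschitzly, whereas the edge-crossing property shows that near a.e.\ $x \in \mathcal{F}_\infty$ the set projects with arbitrarily small measure onto \emph{every} near-horizontal direction — but the density argument is likely the most robust.

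The main obstacle will be step two: turning "infinitely many edge-crossing scales" into a genuine upper bound on the lower density that is quantitatively below $1$, uniformly. One has to be careful that being within $4^{-m_n} a_n$ of $4^{-m_n}\mathbb{N}$ controls not just where $t$ sits in the stage-$n$ grid but where the graph $\mathcal{F}_\infty$ sits \emph{inside} a ball of radius comparable to $4^{-m_n}$ centered at $x$ — one must check the graph does not "fill out" the ball in the $t$-direction, which requires tracking how $f_\infty$ behaves on the neighboring stage-$n$ intervals and using the separation conditions $1000\cdot 4^{-m_k} \le a_k 4^{-m_{k-1}}$ to guarantee the sub-square structure is visible at this scale. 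Once the geometry at a single good scale is pinned down, combining it with Lemma \ref{2.2} to get $\Theta^1_*(\mathcal{F}_\infty, x) < 1$ and hence the contradiction with Marstrand-type density for rectifiable sets is routine.
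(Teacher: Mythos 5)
Your overall strategy---reduce to $\mathcal{F}_\infty$, note that a Lipschitz piece of positive measure forces $\Theta^1_*(\mathcal{F}_\infty,x)=1$ at $\mathcal{H}^1$-a.e.\ of its points, and contradict this using Lemma \ref{2.3} together with the Luzin bound of Lemma \ref{2.2}---is sound in outline and close in spirit to the paper, whose own proof also starts from the density statement \eqref{density} supplied by Lemma \ref{l:kirchheim}. The problem is the key quantitative step. Your geometric picture of the stage-$n$ structure as ``four widely-separated sub-squares of total width $\sim a_{n+1}4^{-m_{n+1}}\ll 4^{-m_n}$'' is the picture for $\mathcal{C}_4$, not for this set: $\mathcal{F}_\infty$ is the graph of a function defined on all of $[0,1]$, so over every stage-$n$ interval it has full horizontal extent $4^{-m_n}$, and there is no horizontal sparsity at any scale. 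The only separation is vertical: adjacent stage-$n$ intervals carry nearly flat graph pieces whose heights differ by $\tfrac34 a_n 4^{-m_n}$. In the main case of interest $a_n\to 0$ (sublinear $g$), this jump is negligible compared with $4^{-m_n}$, so at a typical point the graph inside $B(x,4^{-m_n})$ is within $O(a_n4^{-m_n})$ of a horizontal segment of length $2\cdot 4^{-m_n}$, and one checks $\mathcal{H}^1(\mathcal{F}_\infty\cap B(x,4^{-m_n}))\ge 2\cdot 4^{-m_n}(1-O(a_n^2))$. The density ratio at the scale $r\sim 4^{-m_n}$ therefore tends to $1$; your claimed bound of the $t'$-set by $\tfrac34\cdot 2\cdot 4^{-m_n}$ is false, and no constant $c<1$ can be extracted at that scale.

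The step can be repaired, but only by testing the density at the scale of the vertical jump, $r\sim a_n4^{-m_n}$, and after strengthening Lemma \ref{2.3} to give $d(x_0,4^{-m_n}\mathbb{N})\le \delta a_n4^{-m_n}$ for a small fixed $\delta$ (the Borel--Cantelli argument is unchanged, since $\sum_n \delta a_n=\infty$). Taking, say, $r=\tfrac12 a_n4^{-m_n}$, the window $(x_0-r,x_0+r)$ straddles the grid point, the graph on the far side sits at vertical distance $\approx\tfrac34 a_n4^{-m_n}>r$ and is excluded from $B(x,r)$ entirely, so the admissible $t'$-set has measure at most $(\tfrac12+\delta)\cdot 2r$; Lemma \ref{2.2} then gives $\mathcal{H}^1(\mathcal{F}_\infty\cap B(x,r))\le K(\tfrac12+\delta)\cdot 2r<2r$, which is exactly where $K<2$ enters, as you anticipated. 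With that correction your density contradiction goes through and is a legitimate variant of the paper's argument, which at these same scales instead exhibits two secant lines through $y_0$ (to a point in the same stage-$n$ interval and to one in the adjacent interval, with vertical displacements $\le\tfrac1{10}a_n4^{-m_n}$ and $\ge\tfrac9{10}a_n4^{-m_n}$) separated by a definite angle, contradicting the existence of a tangent. As written, however, your proposal has a genuine gap: the density bound is asserted at the wrong scale, based on an incorrect description of the set.
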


Of course this implies that 
	\begin{align*}
		\lim_{n \rightarrow \infty} \mbox{Fav}(E_n)= \mbox{Fav}(E_{\infty}) =0
	\end{align*}

We will use the following lemma that appears in the paper of Bate, Cs\"ornyei and Wilson \cite{BCW} as a reformulation of a lemma from the work of Kirchheim, \cite{kirchheim}.  This lemma is far more broad than we need for this application.  A simpler corollary may be available, but is not presented here. 

\begin{lemma}\label{l:kirchheim}
Let $X$ be a metric space.  Suppose that $E\subset X$ and $\gamma : [0,1]\to X$ is Lipschitz with
	\begin{align*}
		\mathcal{H}^1(\gamma([0,1]) \cap E)>0.
	\end{align*}
Then there exist a measurable $A\subset [0,1]$ of positive measure and an $L \geq 1$ such that $\gamma(A):= F \subset E$, $\gamma$ restricted to $A$ is bi-Lipschitz with bi-Lipschitz constant $L$, and such that for $\mathcal{H}^1$-a.e. $y_0 \in F$ and every $\varepsilon>0$, if $r$ is sufficiently small then 
	\begin{align}\label{density}
		\mathcal{H}^1(F\cap B(y_0,r))\geq 2r(1-\varepsilon).
	\end{align}
\end{lemma}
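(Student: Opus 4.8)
The plan is to reduce the statement to the structure theory of Lipschitz curves into metric spaces, via Kirchheim's metric differentiability, and then read off the density conclusion at Lebesgue density points. Recall that for the Lipschitz map $\gamma$ the \emph{metric derivative} $\operatorname{md}(\gamma,t):=\lim_{s\to 0} d(\gamma(t+s),\gamma(t))/|s|$ exists for $\mathcal{L}^1$-a.e. $t\in[0,1]$, and that the length (area) formula
$$\int_S \operatorname{md}(\gamma,t)\,dt=\int_X \#\big(S\cap\gamma^{-1}(y)\big)\,d\mathcal{H}^1(y)$$
holds for every measurable $S\subset[0,1]$. Three things must be produced: a positive-measure $A$ on which $\gamma$ is bi-Lipschitz, the containment $\gamma(A)\subset E$, and the density-one lower bound \eqref{density}.

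For the reduction I would first discard $Z:=\{t:\operatorname{md}(\gamma,t)=0\}$: the area formula gives $\mathcal{H}^1(\gamma(Z))=0$, so $\mathcal{H}^1\big((\gamma([0,1])\setminus\gamma(Z))\cap E\big)>0$. Applying the area formula to $\gamma^{-1}(E)\cap\{\operatorname{md}>0\}$ and using $\#(\cdot)\ge 1$ on the image shows this parameter set has positive Lebesgue measure. Decomposing $\{\operatorname{md}>0\}$ into the level sets where $\operatorname{md}(\gamma,t)\in[\lambda,2\lambda]$, $\lambda=2^{-j}$, at least one such set $T$ meets $\gamma^{-1}(E)$ in positive measure. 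On $T$ I would invoke Kirchheim's bi-Lipschitz decomposition: combining the pointwise convergence in the definition of $\operatorname{md}$ with Egorov's theorem makes $d(\gamma(t),\gamma(s))/|t-s|$ converge uniformly, with values in $[\lambda,2\lambda]$, on a positive-measure subset, which, after a further covering and separation step ruling out distinct parameters whose images return close together, yields a positive-measure $A\subset T\cap\gamma^{-1}(E)$ on which $\gamma$ is genuinely bi-Lipschitz with some constant $L\ge 1$. Since $A\subset\gamma^{-1}(E)$ we have $F:=\gamma(A)\subset E$, and shrinking $A$ to its points of density keeps $|A|>0$.

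The density bound is then local. Fix a Lebesgue density point $t_0$ of $A$ at which $\operatorname{md}(\gamma,t_0)=\lambda_0>0$ exists, and set $y_0=\gamma(t_0)$; $\mathcal{H}^1$-a.e. point of $F$ arises this way. Given $\varepsilon>0$ choose $\delta$ with $(1-\delta)^3\ge 1-\varepsilon$. Metric differentiability gives, for all sufficiently small $|t-t_0|$, the bound $d(\gamma(t),\gamma(t_0))\le(1-\delta)^{-1}\lambda_0|t-t_0|$, so for the interval $I_r:=(t_0-(1-\delta)r/\lambda_0,\,t_0+(1-\delta)r/\lambda_0)$, of length $|I_r|=2(1-\delta)r/\lambda_0$, one has $\gamma(A\cap I_r)\subset F\cap B(y_0,r)$ once $r$ is small. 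Because $\gamma|_A$ is injective, the area formula gives $\mathcal{H}^1(\gamma(A\cap I_r))=\int_{A\cap I_r}\operatorname{md}(\gamma,t)\,dt$; shrinking $r$ further so that $\operatorname{md}\ge(1-\delta)\lambda_0$ on $A\cap I_r$ and so that $|A\cap I_r|\ge(1-\delta)|I_r|$ (density of $A$ at $t_0$), I obtain
$$\mathcal{H}^1(F\cap B(y_0,r))\ge(1-\delta)\lambda_0\,|A\cap I_r|\ge(1-\delta)^2\lambda_0\,|I_r|=(1-\delta)^3\,2r\ge 2r(1-\varepsilon),$$
which is exactly \eqref{density}.

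The main obstacle is the bi-Lipschitz extraction, specifically the lower distance bound for parameters far apart in $[0,1]$: metric differentiability controls $\gamma$ only infinitesimally, so the curve may revisit a neighborhood it has left, and removing such returns while keeping positive measure is where Kirchheim's argument does its real work. The second delicate point is that the conclusion demands full density one, not merely a positive lower density; this forces the use of the pointwise metric derivative as an approximate dilation at each density point rather than the global constant $L$, since $L$ alone would only yield density $\gtrsim L^{-2}$.
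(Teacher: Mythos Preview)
The paper does not prove this lemma; it is quoted from \cite{BCW} as a reformulation of Kirchheim's results \cite{kirchheim}, so there is no in-paper argument to compare against. Your sketch is essentially the standard route through Kirchheim's paper: metric differentiability and the area formula to locate a positive-measure parameter set in $\gamma^{-1}(E)\cap\{\operatorname{md}>0\}$, his bi-Lipschitz decomposition (Lemma~4 of \cite{kirchheim}) to produce $A$, and then a local computation at Lebesgue density points of $A$ for \eqref{density}. The outline is sound, and you correctly flag the bi-Lipschitz extraction as the step where the real work of \cite{kirchheim} is being invoked rather than reproduced.

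One phrasing issue in the density step: you ask for ``$\operatorname{md}\ge(1-\delta)\lambda_0$ on $A\cap I_r$,'' a pointwise bound that need not hold for small $r$. What is actually available, and sufficient for your chain of inequalities, is the averaged bound
\[
\int_{A\cap I_r}\operatorname{md}(\gamma,t)\,dt\ \ge\ (1-\delta)\,\lambda_0\,|A\cap I_r|,
\]
which follows once $t_0$ is taken to be simultaneously a density point of $A$ and a Lebesgue point of $\operatorname{md}$ (both hold $\mathcal{L}^1$-a.e.). With that correction the computation yields $\mathcal{H}^1(F\cap B(y_0,r))\ge(1-\delta)^3\cdot 2r$ as you wrote.
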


The proof of Proposition \ref{rect} will follow closely to the proof of pure unrectifiability in \cite{BCW} (Proposition 5). 

	\begin{proof}[Proof of Proposition \ref{rect}]
		 It suffices to prove that $\mathcal{F}_{\infty}$ is purely unrectifiable since $ \mathcal{F}_{\infty}=E_{\infty}$. 

		Suppose that $\mathcal{F}_{\infty}$ is not purely unrectifiable.  Let $\gamma$, $F$, and $A$ be defined as in the statement of Lemma \ref{l:kirchheim}.  By Lemma \ref{2.3}, for almost every $x_0 \in [0,1]$, there are infinitely many $n$ for which $\mbox{dist}(x_0, 4^{-m_n}\mathbb{N})< a_n4^{-m_n}$.  The Luzin condition (Lemma \ref{2.2}) then implies that for $\mathcal{H}^1$-a.e. $y_0=(x_0, f_{\infty}(x_0)) \in F$, 
		\begin{align}\label{close}
			\mbox{dist}(x_0, 4^{-m_n}\mathbb{N})< a_n4^{-m_n}.
		\end{align}

		Fix $y_0= \gamma(s_0)= (x_0,f_{\infty}(x_0))$ for which \eqref{density} holds for every $\varepsilon>0$ and sufficiently small $r>0$ and for which \eqref{close} holds for infinitely many $n$.  Note that we can can choose $s_0$ uniquely since $\gamma$ is a bijection and we can assume that $s_0$ is a density point of $A$.

		By Lemma \ref{2.3} and \eqref{density}, for small enough $r$,
		\begin{align}
			\mathcal{H}^1\left[ f_{\infty}^{-1}(F \cap B(y_0, r)) \right] \geq \tfrac{1}{K} \mathcal{H}^1\left[ F \cap B(y_0, r) \right]\geq \tfrac{1}{K} 2r(1-\varepsilon)
		\end{align}
		We will choose $\varepsilon>0$ small enough so that $\tfrac{1}{K} 2r(1-\varepsilon)>r(1+\delta)$. In this we can, for each $n$ for which $x_0$ satisfies \eqref{close}, choose $z_1, z_2 \in f^{-1}_{\infty}(F \cap B(y_0,a_n4^{-m_n}))$ so that $z_1$ belongs to the same $4^{-m_n}$-length interval and $z_2$ belongs to an adjacent $4^{-m_n}$-length interval.  Thus, 
		\begin{align*}
			|f_n(z_1)-f_n(x_0)| =0 \mbox{ and } |f_n(z_2)-f_n(x_0)|=a_n4^{-m_n}
		\end{align*} 
which, along with $|f_n(t)- f_{\infty}(t)|<\frac{1}{100}a_n4^{-m_n}$ for $t \in [0,1]$, implies 
		\begin{align}\label{disc}
			&|f_{\infty}(z_1)-f_{\infty}(x_0)| \leq \tfrac{1}{10}a_n4^{-m_n} \mbox{ and }\\  \label{disc2}&|f_{\infty}(z_2)-f_{\infty}(x_0)|\geq \tfrac{9}{10}a_n4^{-m_n}.
		\end{align} 
For $i=1, 2$, let $\ell^n_i$ be the line through $y_0$ and $(z_i, f_{\infty}(z_i))$.  Inequalities \ref{disc} and \ref{disc2} imply that the angles between the two secant lines stay a fixed distance apart for each $n$.  In other words, for each $n$ satisfying \eqref{close}, 
		\begin{align*}
			\angle ( \ell^n_1, \ell^n_2)\gtrsim 1
		\end{align*}
 where the constant does not depend on $n$.  Finally, this contradicts the assumption that $y_0$ is a tangent point for $\gamma$.
	\end{proof}

\section{Conclusion}

Theorem \ref{mainthm} and Corollary \ref{maincor} suggest that if there is a connection between the lower density of a set and the behavior of the Favard length of the $\varepsilon$-neighborhood of the set it may not be as simple as a functional correspondence.  For any well-ordering of decreasing functions, and corresponding metric, we can always find a set whose Favard length decreases slower than a function arbitrarily close to the constant function in that metric.  We reiterate that this is very different from the connection between lower Hausdorff density and rectifiability.  

One topic that has not been discussed thus far in this article is the opposite question to the one posed here.  The question of Favard length decreasing at a arbitrarily fast speed.  Of course, for measure zero sets, we can take the single point set and see that $\mbox{Fav}(E, \varepsilon) \leq \varepsilon$ and thus linear decay is the fastest decay.  However,  a singleton set is a zero dimensional set.  An interesting question to ask  would be how fast a positive measure one-dimensional set's Favard length can decay.

\end{document}